\def\MR#1{\href{http://www.ams.org/mathscinet-getitem?mr=#1}{MR#1}}
\def\real{\hbox{\rm\setbox1=\hbox{I}\copy1\kern-.45\wd1 R}}
\def\natural{\hbox{\rm\setbox1=\hbox{I}\copy1\kern-.45\wd1 N}}
\newtheorem{theorem}{Theorem}[section] 
\newtheorem{lemma}[theorem]{Lemma}     
\newtheorem{corollary}[theorem]{Corollary}
\newtheorem{property}[theorem]{Property}
\title[Rigidity Sequences for PRWM Transformations]{Rigidity Sequences of Power Rationally Weakly Mixing Transformations}
\author[T. M. Adams]{Terrence M. Adams}
\address{U.S. Government\\9800 Savage Rd.\\Ft. Meade, MD 20755}
\email{terry@ganita.org}
\date{\today}
\begin{document}

\begin{abstract} 
We prove that a class of infinite measure preserving transformations, 
satisfying a "strong" weak mixing condition, generates all rigidity sequences 
of all conservative ergodic invertible measure preserving transformations 
defined on a Lebesgue $\sigma$-finite measure space. 
\end{abstract}

\subjclass[2010]{Primary 37A40; Secondary 37A25, 28D05}
\keywords{Weak Mixing, Rigid, Infinite Measure, Rational Weak Mixing, Power Rational Weak Mixing}

\maketitle

\section{Introduction}
In the finite measure preservance setting, it is known that the 
weak mixing condition has many equivalent formulations. 
In the infinite measure preservance setting, many of these 
formulations lead to different families 
of infinite measure preserving transformations. 
For a general account of weak mixing conditions 
of infinite measure preserving transformations, 
please see \cite{Aa77,Aar97,Aar13,BayYan14,Dai14,DGMS}. 
Figure \ref{wm-tree} displays several distinct weak mixing 
conditions for infinite measure preserving transformations. 
The stronger weak mixing conditions appear higher in the diagram. 

\begin{figure}
\label{wm-tree}
\begin{tikzpicture}

   {[line width=2pt]
   {[black] \draw (3,0 + .7) -- (3,0 + .7) node[near end,above] {Conservative Ergodic};
   }}
   {[line width=2pt]
   {[black] \draw (3,0 + 1.3) -- (3,0 + 1.7) node[near end,above] {Totally Ergodic};
   }}
   {[line width=2pt]
   {[black] \draw (3,0 + 2.3) -- (3,0 + 2.7) node[near end,above] {Spectral Weak Mixing};
   }}
   {[line width=2pt]
   {[black] \draw (3,0 + 3.3) -- (3,0 + 3.7) node[near end,above] {Ergodic with Isometric Coefficients};
   }}
    {[line width=2pt]
   {[black] \draw (3,0 + 4.3) -- (3,0 + 4.7) node[near end,above] {Double Ergodicity};
   }}
   {[line width=2pt]
   {[black] \draw (3,0 + 5.3) -- (3,0 + 5.7) node[near end,above] {Ergodic Index $2$};
   }}
   {[line width=2pt]
   {[black] \draw (3,0 + 6.3) -- (3,0 + 6.7) node[near end,above] {Ergodic Index $k$};
   }}
   {[line width=2pt]
   {[black] \draw (3,0 + 7.3) -- (3,0 + 7.7) node[near end,above] {$\infty$- Ergodic Index};
   }}
   {[line width=2pt]
   {[black] \draw (3,0 + 8.3) -- (3,0 + 8.7) node[near end,above] {Power Weak Mixing};
   }}
   {[line width=2pt]
   {[black] \draw (3,0 + 9.3) -- (3,0 + 9.7) node[near end,above] {{\bf Power Rational Weak Mixing}};
   }}


\draw[-] (3,5.3) .. controls +(right:1cm) and +(down:1cm) .. (7,7.0) node[above](myarrow1){Rational Weak Mixing};
\draw[-] (7,7.7) .. controls +(up:1cm) and +(right:1cm) .. (3,9.7) node[above](myarrow2){};

\node at (9,3.5)[cloud, draw,cloud puffs=40,cloud puff arc=120, aspect=1, inner sep=-10pt, outer sep=0pt] {
\begin{minipage}{4.5cm}
Other weak mixing:
\begin{itemize}[leftmargin=*,topsep=0pt,itemsep=0pt]
\setlength{\parskip}{0pt}%
\item Power Doubly Ergodic
\item Power Subsequence Rational Weak Mixing
\item $R$-set Weak Mixing 
\end{itemize}
\end{minipage}
};

\end{tikzpicture}
\caption{Weak Mixing with Infinite Measure}
\end{figure}
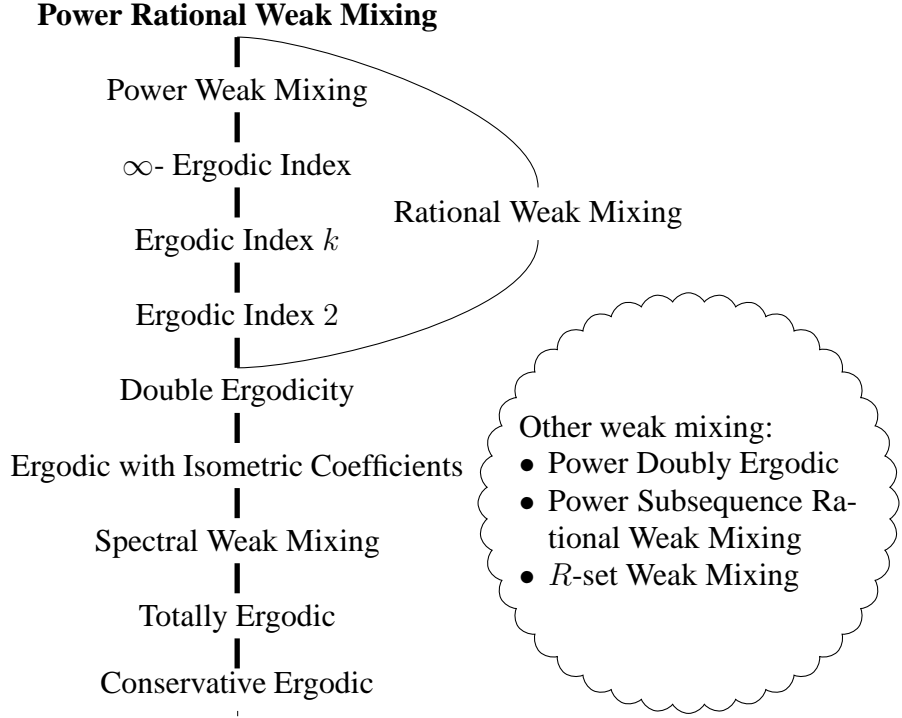
Most of the properties given in Figure \ref{wm-tree} were defined 
previously by multiple authors. Many interesting results have been derived. 
In the finite measure preserving case, it was proven that 
the collection of weak mixing transformations generates all rigidity 
sequences for all ergodic transformations defined on a Lebesgue space. 
See \cite{Towerplex1} and \cite{FayTho14} for details. 
For recent research on rigidity sequences 
in the $\sigma$-finite measure preserving case, 
see \cite{BayYan14,BdJLR,EisGri,Roy09,Roy12,Towerplex1}. 
Our primary goal is to give a class of infinite measure preserving, 
weak mixing transformations that generate all rigidity sequences of all 
ergodic finite measure preserving transformations. 
It was established in \cite{Roy09} and \cite{Roy12} 
that any rigidity sequence 
of a conservative ergodic infinite measure preserving transformation 
occurs as a rigidity sequence of a probability preserving weak mixing 
transformation. 
Thus, the class of infinite measure preserving transformations 
given here will generate all rigidity sequences for all 
conservative ergodic $\sigma$-finite measure preserving transformations. 
We find this more interesting, if we are able to restrict the class 
to a collection that satisfies a strong form of weak mixing. 

\begin{property}[Rational Weak Mixing] 
For any set $F\subset X$ of finite positive measure, 
define the intrinsic weight sequence 
of $F$, $u_k(F)$ and its accumulation by 
\begin{eqnarray}
u_k(F) = \frac{\mu(F\cap T^kF)}{\mu(F)^2} &\mbox{and}& a_n(F) = \sum_{k=0}^{n-1} u_k(F\cap T^kF) .
\end{eqnarray}
A $\sigma$-finite measure preserving transformation is rationally weakly mixing, 
if it is conservative ergodic and there exists a set $F$ of finite positive measure 
such that for all measurable sets $A,B \subset F$, 
\[
\lim_{n\to \infty} \frac{1}{a_n(F)} \sum_{k=0}^{n-1} | \mu(A\cap T^kB) - \mu(A)\mu(B)u_k(F) | = 0.
\]
\end{property}
Rational weak mixing was first introduced by Aaronson \cite{Aa77} 
as a counterpart to mixing on a sequence of density one 
in the finite measure preserving case. 
It is known that rational weak mixing implies double ergodicity 
which implies weak rational ergodicity \cite{Aa77}, 
and subsequently, implies spectral weak mixing. 
Rational weak mixing does not imply ergodic index 2, in general. 

In section \ref{main1}, 
we prove the following result. 
\begin{theorem}
\label{infinite_towerplex}
Let $(X,\mathcal{B},\mu)$ be a Lebesgue probability space. 
Suppose $R$ is an invertible 
ergodic $\mu$-preserving transformation on $(X,\mathcal{B},\mu)$ with 
a rigidity sequence $\rho_n \in \natural$ for $n\in \natural$. 
There exists an invertible infinite measure preserving transformation $T$ that is both 
rigid on $\rho_n$, and is rationally weakly mixing. 
\end{theorem}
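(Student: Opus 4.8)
\emph{Framework.} I would build $T$ by cutting and stacking on a $\sigma$-finite Lebesgue space, producing a refining sequence of Rokhlin columns $C_0\subset C_1\subset\cdots$, where $C_m$ has base $B_m$, height $h_m$, and levels $B_m,TB_m,\dots,T^{h_m-1}B_m$; the set for the Rational Weak Mixing property will be the initial base $F=B_0$, which stays a union of $C_m$-levels at every stage. Passing from $C_m$ to $C_{m+1}$ means cutting $C_m$ into $r_{m+1}\ge 2$ equal-width subcolumns and restacking them bottom to top, inserting $a_{m+1}^{(i)}\ge 0$ fresh spacer levels above the $i$-th subcolumn, so $h_{m+1}=r_{m+1}h_m+\sum_i a_{m+1}^{(i)}$. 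Since the cut numbers $(r_m)$ and the spacers $(a_m^{(i)})$ are free, the plan is to tune them so that simultaneously the tower heights and their small multiples form a rigidity set for $T$ containing $(\rho_n)$, while the spacers are spread out enough to force rational weak mixing. Any rank-one construction of this type is conservative and ergodic, and has infinite total measure once the cumulative spacer mass is unbounded, so the content is entirely in the two analytic properties.

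\emph{Rigidity.} The mechanism I would rely on is that inside $C_\ell$ with $\ell\gg m$ the map $T^{h_m}$ sends each $C_m$-level of each copy of $C_m$ to the corresponding $C_m$-level of the next copy, except near the locations of spacers inserted at stages between $m$ and $\ell$. Hence for any set $A$ that is a finite union of levels of an earlier column, $\mu(T^{h_m}A\,\triangle\,A)$ is at most $\mu(A)$ times the relative spacer mass inserted after stage $m$ at scales comparable to $h_m$ (and similarly for $T^{jh_m}$, $j$ bounded); choosing the later spacers sparse relative to the cut numbers drives this to $0$ as $m\to\infty$, making every small multiple of every tower height a rigidity time. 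It remains to choose the heights $(h_m)$, subject only to $h_{m+1}\ge r_{m+1}h_m$, so that the set of their small multiples is cofinal in $(\rho_n)$; this is the step that must use the hypothesis, since the fact that $(\rho_n)$ is a rigidity sequence of an ergodic probability transformation carries exactly the structural information (approximation by periodic transformations, equivalently a continuous probability measure $\nu$ on $[0,1)$ with $\widehat\nu(\rho_n)\to1$) needed to lay the heights out so as to multiplicatively cover $(\rho_n)$.

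\emph{Rational weak mixing.} Here I would establish a sufficient condition on the spacers — that at infinitely many stages the vector $(a_{m+1}^{(i)})_i$ is spread out relative to $h_m$ (say an arithmetic progression with step prime to and length comparable to $h_m$, or a suitably random vector) — forcing
\[
\lim_{N\to\infty}\frac{1}{a_N(F)}\sum_{k=0}^{N-1}\bigl|\mu(A\cap T^kB)-\mu(A)\mu(B)u_k(F)\bigr|=0\qquad\text{for all }A,B\subset F .
\]
This lemma is the analytic core: reduce $A,B$ to level-unions of a fixed $C_m$; for $k<h_\ell$ expand $\mu(A\cap T^kB)$ via the combinatorics of how $C_m$-levels sit inside $C_\ell$; and use the spread of the spacers to show the deviation from $\mu(A)\mu(B)u_k(F)$ is small on $u_k(F)$-weighted average, checking also that $a_N(F)$ grows fast enough that the sparse rigidity-type times are negligible. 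One adapts the known rank-one estimates for rational weak mixing.

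\emph{Main obstacle.} The crux is the tension between the two tasks on one $T$: rigidity along all of $(\rho_n)$ needs later spacers small (relative to scale) so as not to spoil the copy-permutation structure, and needs the heights arranged to reach the prescribed multiples of the $\rho_n$; rational weak mixing needs spacers that are substantial and well spread. Making these compatible — spacers spread enough to mix yet of small enough cumulative relative mass near each scale to preserve rigidity, together with a height schedule hitting the $\rho_n$ — is where the real difficulty lies, and showing it is possible for \emph{every} rigidity sequence of \emph{every} ergodic probability transformation is the part I expect to require the most care, quite possibly forcing a tower-complex rather than single-column construction.
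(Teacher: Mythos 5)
Your proposal is a program, not a proof, and its central step is exactly the one that fails for general rigidity sequences. You propose to obtain rigidity along $(\rho_n)$ by making the rigidity times of a rank-one construction --- small multiples $jh_m$ of the tower heights --- cover $(\rho_n)$, and you defer this to a ``height schedule'' extracted from the hypothesis (via a continuous measure $\nu$ with $\widehat\nu(\rho_n)\to 1$). But in a cutting-and-stacking construction the heights grow at least geometrically ($h_{m+1}\ge r_{m+1}h_m\ge 2h_m$), so the times $jh_m$ with $j$ controlled are $h_m$-separated inside $[h_m,h_{m+1})$, whereas an arbitrary rigidity sequence of an ergodic probability transformation can satisfy $\rho_{n+1}/\rho_n\to 1$ (this is precisely Corollary \ref{slowrigid} of the paper). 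No argument is given, and none is apparent, for why every $\rho_n$ should land on (or near enough to) such a lattice of special times; the rigidity structure of a general ergodic transformation is simply not of this multiplicative form. You yourself flag this (``quite possibly forcing a tower-complex rather than single-column construction''), which is an acknowledgment that the key idea is missing. The second load-bearing step, the ``analytic core'' lemma that well-spread spacers force rational weak mixing while coexisting with the rigidity constraints, is likewise only named, and you do not address the genuine subtlety that since $a_N(F)/N\to 0$, the rational weak mixing average does not automatically pass from a dense family of sets to all measurable $A,B\subset F$ (the paper needs a separate Cauchy--Schwarz argument, Lemmas \ref{approx1}--\ref{approx2}, precisely for this).

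The paper's route avoids your main obstacle entirely: it does not manufacture the rigidity times from tower heights. It first replaces $R$ by a probability-preserving \emph{weak mixing} transformation rigid along the same $\rho_n$ (the finite-measure results of \cite{Towerplex1,FayTho14}), then tower-multiplexes this $R$ with a non-ergodic infinite reservoir $S$, producing a chain $R_n\cong R$ on growing spaces $X_n$ with conjugacies $\tau_n$ and a limit $T$ defined a.e.\ because $\mu(D_n)$ is summable. Lemma \ref{rescalinglem} shows both the rigidity estimates along $\rho_n$ and the weak mixing averages degrade only by summable errors along the chain, so rigidity is \emph{inherited} from $R$ rather than re-derived, and rational weak mixing follows from the finite-measure weak mixing of each $R_n$ together with the approximation lemmas and a choice of parameters $M_n,\epsilon_n,h_n$. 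If you want to salvage your approach, you would need either a proof that every rigidity sequence can be realized through height multiples of a rank-one infinite construction (not known, and doubtful as stated), or you would need to import the rigidity from an auxiliary transformation as the paper does --- at which point you have rediscovered tower multiplexing.
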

\noindent 
We use the tower multiplexing technique 
given in \cite{Towerplex1}. In that paper, 
a rigid weakly mixing transformation 
is produced from multiplexing an ergodic rigid transformation 
with a weakly mixing transformation. 
All transformations were finite measure preserving. 
Here we wish to tower multiplex a finite measure preserving 
transformation with an infinite measure preserving 
transformation. Let $R$ be any finite measure preserving, 
ergodic transformation with rigidity sequence $\rho_n$. 
Since it was shown in \cite{Towerplex1} and \cite{FayTho14} that any rigid sequence 
of an ergodic finite measure preserving transformation may be realized 
by a finite measure preserving weak mixing transformation, then 
it is sufficient to assume the starter transformation $R$ is weak mixing, 
and rigid on $\rho_n$. 
The infinite measure preserving transformation $S$ will be akin to the map $S(x) = x + 1$ 
defined on $[0,\infty)$. The map $S$ is invertible, but it's not ergodic, 
which is not required for this construction. 
We will produce an infinite measure preserving transformation $T$ 
by multiplexing $R$ with $S$: 
\[
T = \mbox{Mux}\Big( \mbox{rigid weak mixing }R , \mbox{infinite measure preserving }S \Big) .
\] 

To strenghten our results, we introduce the notion 
of power rational weak mixing. 
An invertible infinite measure preserving transformation 
is {\it power rationally weakly mixing}, 
if given $\ell \in \natural$ and nonzero 
integers $k_1,k_2,\ldots ,k_{\ell}$, 
the Cartesian product 
\[
T^{k_1}\times T^{k_2}\times \ldots \times T^{k_{\ell}}
\]
is rationally weakly mixing. 
In this paper, assume all transformations are invertible 
and preserve a $\sigma$-finite measure defined on a Lebesgue space. 
Aaronson previously introduced the notion 
of power subsequence rational weak mixing \cite{Aar13}, 
and power weak mixing was introduced in \cite{DGMS}. 
Note, power weak mixing is defined as a transformation 
where all finite Cartesian products of nonzero powers 
are ergodic. Thus, power rational weak mixing 
implies power weak mixing. 
In the final section, we extend 
Theorem \ref{infinite_towerplex} to show that 
the class of power rational weak mixing, 
infinite measure preserving transformations 
generates all rigidity sequences 
for all finite measure preserving ergodic transformations. 
\begin{theorem}
\label{infinite_towerplex2}
Let $(X,\mathcal{B},\mu)$ be a Lebesgue probability space. 
Suppose $R$ is an invertible ergodic $\mu$-preserving transformation 
on $(X,\mathcal{B},\mu)$ with a rigidity sequence 
$\rho_n \in \natural$ for $n\in \natural$. 
There exists an invertible infinite measure preserving transformation 
$T$ that is both rigid on $\rho_n$, 
and is {\it power} rationally weakly mixing. 
\end{theorem}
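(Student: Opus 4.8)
The strategy is to upgrade the construction of Theorem \ref{infinite_towerplex} so that rational weak mixing is obtained not merely for $T$ itself, but for every finite Cartesian product $T^{k_1}\times\cdots\times T^{k_\ell}$ of nonzero powers simultaneously. As in the previous theorem, we may first replace the given ergodic transformation $R$ by a finite measure preserving \emph{weakly mixing} transformation with the same rigidity sequence $\rho_n$ (by \cite{Towerplex1,FayTho14}), and then set $T=\mathrm{Mux}(R,S)$ where $S$ is the translation-by-one map on $[0,\infty)$. The key point is that the multiplexing parameters (the heights of the towers, the cut-and-stack spacer sequences, and the rate at which the $R$-part is copied across the levels) can be chosen along a single sequence of stages so that \emph{all} of the countably many product conditions are met. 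Rigidity of $T$ on $\rho_n$ is inherited exactly as in Theorem \ref{infinite_towerplex}: the multiplexing is arranged so that on a fixed base block the action of $T^{\rho_n}$ mimics that of $R^{\rho_n}$, which converges to the identity.

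The main work is to verify rational weak mixing of the products. Here I would exploit two features. First, a nonzero power $T^{k}$ is again, up to the multiplexing bookkeeping, a multiplex of $R^{k}$ with $S^{k}$; since $R$ is weakly mixing, $R^{k}$ is ergodic, and $S^{k}$ is still (a conjugate of) translation by $k$ on a half-line, which is conservative-free of the obstructions we need to avoid. Second, and more importantly, because $R$ was chosen weakly mixing, the product $R^{k_1}\times\cdots\times R^{k_\ell}$ is itself ergodic (indeed weakly mixing) on the finite measure space, so the "finite part" of the product multiplex behaves correctly. The heart of the argument is an estimate, at each construction stage $N$, of the quantity
\[
\frac{1}{a_n(F)}\sum_{k=0}^{n-1}\bigl|\,\mu_\otimes\!\left(A\cap (T^{k_1}\times\cdots\times T^{k_\ell})^{k}B\right)-\mu_\otimes(A)\,\mu_\otimes(B)\,u_k(F)\,\bigr|
\]
for $A,B$ unions of levels of the stage-$N$ tower (a dense algebra), where $F$ is the product of the base blocks. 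The cut-and-stack / multiplexing spacers are chosen so that, over the range of $k$ up to the next stage, the overlap $\mu(F\cap T^k F)$ and its product-analogue are governed by the combinatorics of the translation map $S$ alone (a known, explicitly computable return-weight sequence $u_k(F)$), while the deviation terms are pushed below $\varepsilon_N a_n(F)$ by the weak mixing of the $R$-part. Because these constraints at stage $N$ involve only finitely many products $(k_1,\dots,k_\ell)$, a standard diagonalization over $N$ secures all of them at once.

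The step I expect to be the main obstacle is controlling the interaction between the infinite-measure growth coming from $S$ and the power products: one must check that the accumulation sequence $a_n(F)$ for the product is asymptotically the product (or an explicit function) of the one-dimensional accumulation sequences, and that this asymptotic is stable under the perturbations introduced by multiplexing with $R^{k_i}$. Equivalently, one needs the return-time statistics of $F$ under $T^{k_1}\times\cdots\times T^{k_\ell}$ to be asymptotically independent across coordinates with the correct normalization; establishing this uniform asymptotic independence — rather than merely ergodicity of the product — is where the construction parameters must be tuned most carefully, and it is the technical core that replaces the single-transformation estimate of Theorem \ref{infinite_towerplex}.
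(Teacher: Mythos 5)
Your skeleton matches the paper's: keep the towerplex construction of Theorem \ref{infinite_towerplex} (after first replacing $R$ by a weak mixing transformation with the same rigidity sequence), enumerate the countably many vectors of nonzero powers, impose at stage $n$ the product version of the stage-$n$ weak mixing condition for finitely many vectors (possible because all finite Cartesian products of nonzero powers of the weak mixing $R_n$ are again weak mixing), and diagonalize as $j_n\to\infty$; rigidity on $\rho_n$ is inherited exactly as before. The gap is precisely the step you flag and do not carry out: the verification of rational weak mixing for each product, and the route you sketch for it is misdirected for this construction. The intrinsic weights $u_k(F)$ are not ``governed by the combinatorics of the translation map $S$ alone'': in the towerplex construction the maps $S_n$ are finite cycles whose only role is to feed measure into the $R$-tower, and on each time window $M_n\leq N_n<M_{n+1}$ the limit $T$ agrees, off a set of measure controlled by $\sum_{k>n} 2M_k\kappa\epsilon_k$, with the finite measure preserving weak mixing $R_{n+1}$ on $X_{n+1}$. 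Hence $u_k(F)\approx 1/\mu(X_{n+1})$ on that window; the weight sequence is dictated by the growth of $\mu(X_n)$, the deviations are made small by choosing $M_n$ after $P_n^{\prime}$ (condition (\ref{cond1})) and are propagated forward by Lemma \ref{rescalinglem}, and the parameters must satisfy the budget $\epsilon_n nM_n<\epsilon_{n-1}$, $h_n>M_n$.

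Likewise, the step you call the technical core --- factorizing the accumulation sequence of the product and proving uniform asymptotic independence of return-time statistics across coordinates --- is neither supplied by you nor needed as a separate statement. One simply runs the same window argument directly on the product: on $[M_n,M_{n+1})$ the map $T^{u_1}\times\cdots\times T^{u_{|v|}}$ agrees off a small set with $\mathcal{R}_{n+1,v}=R_{n+1}^{u_1}\times\cdots\times R_{n+1}^{u_{|v|}}$, a weak mixing finite measure preserving transformation of $X_{n+1}^{\times|v|}$, so with reference set $F\times\cdots\times F=X_1\times\cdots\times X_1$ the intrinsic weights on the window are asymptotically the reciprocal of the product-space measure $\mu(X_{n+1})^{|v|}$ and the Ces\`aro deviations are small by the product form of (\ref{cond1}); Lemma \ref{rescalinglem} extends coordinatewise by applying $\tau_n$ pointwise, and Lemmas \ref{approx1} and \ref{approx2} --- which are essential because $a_N$ may grow much more slowly than $N$, so one cannot pass from a dense algebra to all measurable sets by the usual finite-measure density argument --- extend the estimate to all measurable subsets of the product reference set. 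Two further items are missing from your plan: conservative ergodicity of each product, which is part of the definition of rational weak mixing and is arranged by forcing $X_1\times\cdots\times X_1$ to sweep out, and the compatibility of the diagonalization with the error budget, i.e.\ choosing $j_n$ to grow slowly enough that $\sum_{k>n}2\kappa\epsilon_{k-1}\to 0$ still holds.
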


\noindent 
A result from \cite{Roy09}, together with our result, 
show that rigidity sequences of ergodic finite measure 
preserving transformations coincide with rigidity 
sequences of conservative ergodic infinite measure 
preserving transformations. 
Moreover, the following corollary generalizes 
the main results from \cite{Towerplex1} and \cite{FayTho14}. 
\begin{corollary}
\label{inf-to-inf}
Let $(X,\mathcal{B},\mu)$ and $(Y,\mathcal{A},\nu)$ 
be Lebesgue $\sigma$-finite measure spaces. 
The set of rigidity sequences generated by all invertible 
{\it power} rationally weakly mixing transformations 
defined on $(Y,\mathcal{A},\nu)$
is identical to the set of rigidity sequences generated 
by all invertible conservative ergodic measure preserving 
transformations on $(X,\mathcal{B},\mu)$. 
\end{corollary}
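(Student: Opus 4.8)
The plan is to derive Corollary~\ref{inf-to-inf} by combining Theorem~\ref{infinite_towerplex2} with the result of Roy \cite{Roy09} quoted in the excerpt, after first reducing the $\sigma$-finite ``starter'' situation to the probability-preserving one. Let me record what each side of the claimed equality consists of. Write $\mathcal{R}_{\mathrm{PRWM}}(Y,\mathcal{A},\nu)$ for the set of sequences $\rho_n\in\natural$ that arise as rigidity sequences of some invertible power rationally weakly mixing transformation on $(Y,\mathcal{A},\nu)$, and $\mathcal{R}_{\mathrm{CE}}(X,\mathcal{B},\mu)$ for the set of sequences arising as rigidity sequences of some invertible conservative ergodic measure preserving transformation on $(X,\mathcal{B},\mu)$. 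We must show these two sets coincide, and in particular that they do not depend on the choice of Lebesgue $\sigma$-finite space on either side.

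The easy inclusion is $\mathcal{R}_{\mathrm{PRWM}}(Y,\mathcal{A},\nu)\subseteq\mathcal{R}_{\mathrm{CE}}(X,\mathcal{B},\mu)$: a power rationally weakly mixing transformation is in particular conservative ergodic (power weak mixing implies ergodicity of $T=T^1$, and conservativity follows from ergodicity on an infinite measure space via the Hopf decomposition, or is built in to rational weak mixing as stated in Property~1). Since all Lebesgue $\sigma$-finite measure spaces of a given ``type'' are isomorphic mod null sets, and since the property of being a rigidity sequence is an isomorphism invariant, any $\rho_n$ realized on $(Y,\mathcal{A},\nu)$ is realized by a conservative ergodic transformation on $(X,\mathcal{B},\mu)$ provided the two spaces have the same type; if they differ (one finite, one infinite, or atomic pieces), one still argues that the set of rigidity sequences is the same, because — and this is the content of the reverse inclusion — every such sequence already comes from a probability-preserving transformation. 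So the substantive work is the reverse inclusion.

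For the reverse inclusion, start with an arbitrary invertible conservative ergodic measure preserving $T_0$ on $(X,\mathcal{B},\mu)$ with rigidity sequence $\rho_n$. If $(X,\mathcal{B},\mu)$ is infinite, invoke \cite{Roy09}: the rigidity sequence $\rho_n$ is also a rigidity sequence of some probability preserving weak mixing transformation $R$ on a Lebesgue probability space. If $(X,\mathcal{B},\mu)$ is already a probability space, take $R=T_0$ after passing to an ergodic component if needed, or — since by \cite{Towerplex1,FayTho14} any rigidity sequence of a probability preserving ergodic transformation is one of a probability preserving weak mixing transformation — we may assume $R$ is weak mixing. Either way we now have a weak mixing probability preserving $R$ that is rigid on $\rho_n$. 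Apply Theorem~\ref{infinite_towerplex2} to $R$: it produces an invertible infinite measure preserving transformation $T$ that is rigid on $\rho_n$ and power rationally weakly mixing. Transporting $T$ by a measure isomorphism onto $(Y,\mathcal{A},\nu)$ (possible whenever $(Y,\mathcal{A},\nu)$ is an infinite Lebesgue $\sigma$-finite space, which is the only interesting case — if $(Y,\mathcal{A},\nu)$ is finite there are no infinite measure preserving transformations on it and the corollary is to be read on the infinite spaces, matching the theorem statements) shows $\rho_n\in\mathcal{R}_{\mathrm{PRWM}}(Y,\mathcal{A},\nu)$.

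The main obstacle, and the only place any care is needed, is bookkeeping the domains: Theorem~\ref{infinite_towerplex2} starts from a \emph{probability} space and produces a transformation on \emph{some} infinite Lebesgue space, while the corollary quantifies over \emph{arbitrary} Lebesgue $\sigma$-finite $(X,\mathcal{B},\mu)$ and $(Y,\mathcal{A},\nu)$. The resolution is the standard fact that, up to measure isomorphism, there is only one nonatomic Lebesgue probability space and only one $\sigma$-finite infinite nonatomic Lebesgue space, so ``realizable on $(Y,\mathcal{A},\nu)$'' does not depend on which infinite $Y$ is chosen, and atomic or finite cases either reduce to these or are vacuous; combined with the fact that being a rigidity sequence is preserved under measure isomorphism, the two sets in the corollary are each seen to equal the single set $\{\rho_n : \rho_n \text{ is a rigidity sequence of some probability preserving weak mixing transformation}\}$, so they coincide. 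I would therefore phrase the proof as: (i) both sets equal the common set of probability-preserving weak-mixing rigidity sequences, using \cite{Roy09} and \cite{Towerplex1,FayTho14} for one containment and Theorem~\ref{infinite_towerplex2} for the other; (ii) invoke uniqueness of Lebesgue $\sigma$-finite spaces to remove dependence on $(X,\mathcal{B},\mu)$ and $(Y,\mathcal{A},\nu)$.
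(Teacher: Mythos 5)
Your argument is essentially the paper's: use \cite{Roy09} (Poisson suspensions) to pass from a conservative ergodic $\sigma$-finite transformation to a probability preserving weak mixing $R$ rigid on $\rho_n$ (or \cite{Towerplex1,FayTho14} when the starter space is finite), then apply Theorem~\ref{infinite_towerplex2} to realize $\rho_n$ by a power rationally weakly mixing infinite measure preserving $T$, and use uniqueness of Lebesgue spaces to transport between spaces; the easy direction is that power rational weak mixing entails conservative ergodicity.

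The one place you diverge is the treatment of the finite-measure cases. The paper runs through the four cases explicitly: when $\nu(Y)<\infty$ (with $\mu(X)$ finite or infinite) it does \emph{not} treat the statement as vacuous, but reads the left-hand set via the finite-measure results of \cite{Towerplex1,FayTho14} together with \cite{Roy09}, so the asserted equality still has content there. Your proposal instead declares the finite-$Y$ case empty of power rationally weakly mixing transformations and ``to be read on the infinite spaces''; under the paper's literal definition (power rational weak mixing is defined for infinite measure preserving transformations) that reading would make the corollary's equality fail when $\nu(Y)<\infty$, so you are in effect proving a slightly weaker statement than the one asserted. This is a matter of interpretation rather than of the main construction --- the substantive cases (both spaces infinite, or $X$ finite and $Y$ infinite) are handled exactly as in the paper --- but if you want the corollary as stated, you should either handle $\nu(Y)<\infty$ as the paper does (identifying the relevant class on a finite measure space with weak mixing and invoking \cite{Towerplex1,FayTho14} and \cite{Roy09}), or state explicitly that you are restricting to infinite $(Y,\mathcal{A},\nu)$.
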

\begin{proof}
The case where $\mu(X)$ and $\nu(Y)$ are finite 
is handled in \cite{Towerplex1} and \cite{FayTho14}. 
Suppose $\mu(X)=\infty$ and $\nu(Y)<\infty$. 
It is proved in \cite{Roy09} that the set 
of rigidity sequences generated by all conservative 
ergodic measure preserving transformations 
on $(X,\mathcal{B},\mu)$ is contained in the set 
of rigidity sequences generated by all invertible 
weak mixing transformations defined on $(Y,\mathcal{A},\nu)$. 
Theorem \ref{infinite_towerplex2} shows these sets are equal. 
Likewise, the case where $\mu(X)<\infty$ and $\nu(Y)=\infty$ 
follows from Theorem \ref{infinite_towerplex2} and \cite{Roy09}.  
Suppose both $\mu(X)$ and $\nu(Y)$ are infinite, and 
$R$ is infinite measure preserving and conservative ergodic 
on $(X,\mathcal{B},\mu)$.  By \cite{Roy09}, 
there exists a Poisson suspension $R^*$ such that 
$R^*$ is probability preserving, weak mixing and rigid on $\rho_n$. 
By Theorem \ref{infinite_towerplex2}, there exists an invertible 
infinite measure preserving power rationally weakly mixing 
$T$ that is rigid on $\rho_n$. 
\end{proof}

Note, recently, B. Fayad and A. Kanigowski  \cite{FayKan14} were able to construct a rigidity sequence 
for a finite measure preserving weak mixing transformation that is not rigid 
for any irrational rotation. This proves that the class of rigidity sequences for finite measure 
preserving weak mixing transformations is strictly larger than the class of rigidity sequences 
for finite measure preserving discrete spectrum transformations. 

Also, recently, R. Bayless and K. Yancey \cite{BayYan14} have given 
many explicit examples of infinite measure preserving transformations 
that are rigid and also satisfy a variety of weak mixing conditions 
(i.e. spectral weak mixing, rational ergodicity, ergodic Cartesian square). 

\section{Infinite Towerplex Constructions}
The towerplex method was first defined in section 2 of \cite{Towerplex1}.  
The use case here is simpler, since the only role of $S$ is to supply $T$ with infinite measure. 
There are a few main parameters that determine the final transformation. 
In \cite{Towerplex1}, 
two sequences $r_n$ and $s_n$ are defined such that $r_n$ represents the proportion of mass 
switching from the $R$-tower to the $S$-tower. Similarly, $s_n$ represents the proportion of mass 
switching from the $S$-tower to the $R$-tower. Using the notation from \cite{Towerplex1}, 
then the following values could be used to produce our desired transformation: 
\begin{eqnarray}
r_n = 0 &\mbox{and}& s_n = \frac{1}{n} . 
\end{eqnarray} 
Thus, for the constructions in this paper, we do not wish to transfer mass from the $R$-tower 
to the $S$-tower, and we wish to transfer measure ${1} / {n}$ from the $S$-tower 
to the $R$-tower at stage $n$.  It will be simpler to define $S_n:Y_n\to Y_n$ such that 
\begin{eqnarray}
\mu (Y_n) = \frac{1}{n} &\mbox{and}& s_n = 1 .  
\end{eqnarray} 
The transformation $S_n:Y_n\to Y_n$ will 
be a cycle on $h_n$ intervals, each with length ${1} / {nh_n}$ for some $h_n \in \natural$. 
The sequence $h_n$ will correspond to heights of Rohklin towers for the transformation $R_n$. 
At stage $n$ in the construction, a finite measure preserving, weak mixing transformation 
$R_n:X_n\to X_n$ will be defined to be isomorphic to $R$.  The set $X_n$ will be specified 
inductively. Finally, we will specify a sequence of refining, generating partitions $P_n$. 
The assembled transformation 
$T:\bigcup_{n=1}^{\infty}X_n \to \bigcup_{n=1}^{\infty}X_n$ will be invertible and 
ultimately (power) rationally weakly mixing with respect to $\mu$. 

\subsection{Towerplex Chain}
Suppose $h_n \in \natural$ and $\epsilon_n >0$ are such that 

\noindent 
$\sum_{n=1}^{\infty}{1}/{h_n} < \infty$ and $\sum_{n=1}^{\infty}\epsilon_n < \infty$. 
Initialize $R_1 = R$ on $X=X_1$ (ex. $X_1 = [0,1)$).  
Let $Y_1=[1,2)$ and define $S_1(x) = x + {1} / {h_1}$ on $[1,1 + {(h_1 - 1)}/{h_1})$ 
and $S(x)=x - {(h_1-1)} / {h_1}$ on $[1 + {(h_1-1)}/{h_1},2)$. 

Let $I_1, RI_1, \ldots ,R^{h_1-1}I_1$ be a Rohklin tower of height $h_1$ such that 
$\mu(E_1) < \epsilon_1$ where $E_1 = X_1\setminus \bigcup_{k=0}^{h_1-1}R_1 ^kI_1$. 
Let $X_2 = X_1 \cup Y_1$, and $d\in \real$ be such that 
\[
\frac{\mu(E_1) + d }{\mu(X_1)+\mu(Y_1)-d} = \frac{\mu(E_1) + d }{2 - d} = \frac{\mu(E_1)}{\mu(X_1)} .
\]
Let $J_1=[1, 1 + {1} / {h_1})$ be the base of $S_1$. 
Let $I_1^*$ be a subset of $J_1$ with measure ${ \vert d \vert } / {h_1}$. 
Let $X_1^{\prime} = E_1 \cup \bigcup_{k=0}^{h_1-1}R^k I_1^*$. 
Thus, $I_1\cup J_1 \setminus I_1^*, R_1 I_1 \cup S_1 (J_1 \setminus I_1^*), \ldots ,
R_1^{h_1-1} I_1 \cup S_1^{h_1-1} (J_1 \setminus I_1^*)$ are disjoint sets with equal measure. 
These sets together with $X_1^{\prime}$ may form a rescaled Rohklin tower for $R$. 
Now, we specify how to define $R_2$ consistently. 
Define 
\[
\mathcal{C}_1=\{ \bigcap_{k=0}^{h_1-1} R_1^{-k} p_k : p_k\in P_1, p_k\subset R_1^{k}I_1 \} . 
\]
The collection $\mathcal{C}_1$ generates a partition on $I_n$.  Let $P_{1}^{\prime}$ be the smallest 
partition generated by the collection:
\[
\bigvee_{k=0}^{h_1-1} \{ R_1^{k} p : p\in \mathcal{C}_1 \} \vee \{ S_1^k (J_1\setminus I_1^*) \} . 
\] 

Define $\tau_1:X_1^{\prime}\to E_1$ as a measure preserving map between 
normalized spaces 
$(X_1^{\prime},\mathbb{B} \cap X_1^{\prime},\frac{\mu}{\mu (X_1^{\prime})})$ and 
$(E_1,\mathbb{B} \cap E_1,\frac{\mu}{\mu (E_1)})$. 
Extend $\tau_1$ to the new tower base, 
$$\tau_1:I_1 \cup J_1\setminus I_1^*\to I_1$$
such that $\tau_1$ preserves normalized measure between 
$$\frac{\mu}{\mu (I_1 \cup J_1\setminus I_1^*)}\mbox{ and }\frac{\mu}{\mu (I_1)}.$$
Define $\tau_1$ on the remainder of the tower consistently 
such that 
\begin{eqnarray*} 
\tau_1(x)= 
\left\{\begin{array}{ll}
R_1^{i}\circ \tau_1 \circ R_1^{-i}(x) & \mbox{if $x\in R_1^i(I_1)$ for $0\leq i<h_1$} \\ 
R_1^{i}\circ \tau_1 \circ S_1^{-i}(x) & \mbox{if $x\in S_1^{i}(J_1\setminus I_1^*)$ for $0\leq i<h_1$} . 
\end{array}
\right.
\end{eqnarray*}
Since $\tau_1$ is a contraction, we may require for all $p\in P_1$,
\[
\tau_1(p) \subset p .
\]
Define 
$R_2:X_2\to X_2$ as $R_2=\tau_1^{-1}\circ R_1\circ \tau_1$. Note 
\begin{eqnarray*} 
R_2(x)= 
\left\{\begin{array}{ll}
S_1(x) & \mbox{if $x\in S_1^{i}(J_1\setminus I_1^*)$ for $0\leq i<h_1-1$} \\ 
R_1(x) & \mbox{if $x\in R_1^{i}(I_1)$ for $0\leq i<h_1-1$} 
\end{array}
\right.
\end{eqnarray*}
Clearly, $R_2$ is isomorphic to $R_1$ and $R$. 
Set $Y_2 = [2,2.5)$ and $S_2:Y_2\to Y_2$ by 
$S_2(x) = x + ({1} / {2h_2})$ for $x\in [2,2.5 - {1} / {2h_2})$ and 
$S_2(x) = x - ({h_2 - 1}) / {2h_2}$ for $x\in [2.5 - {1} / {2h_2}, 2.5)$. 
Let $b_1 = 2$ be the right endpoint of $Y_1$ and let 
$b_2 = 2.5$ be the right endpoint of $Y_2$. 

\subsection{General Multiplexing Operation}
Let $I_n, RI_n, \ldots ,R^{h_n-1}I_n$ be a Rohklin tower of height $h_n$ such that 
$\mu(E_n) < \epsilon_n$ where $E_n = X_n\setminus \bigcup_{k=0}^{h_n-1}R_n ^kI_n$. 
Suppose $b_n$ and $Y_n = [b_{n-1},b_n)$ have been defined.  
Let $X_{n+1} = X_n \cup Y_n$, and $d_n \in \real$ be such that 
\[
\frac{\mu(E_n) + d_n }{\mu(X_n)+\mu(Y_n)-d_n} = \frac{\mu(E_n)}{\mu(X_n)} . 
\]
Let $J_n=[0,{1} / {h_n})$ be the base of $S_n$. 
Let $I_n^*$ be a subset of $J_n$ with measure ${ \vert d_n \vert } / {h_n}$. 
Let $X_n^{\prime} = E_n \cup \bigcup_{k=0}^{h_n-1}R^k I_n^*$. 
The set $X_n^{\prime} \setminus E_n$ is the transfer set for stage $n$. 
Thus, $I_n\cup J_n \setminus I_n^*, R_n I_n \cup S_n (J_n \setminus I_n^*), \ldots ,
R_n^{h_n-1} I_n \cup S_n^{h_n-1} (J_n \setminus I_n^*)$ are disjoint sets with equal measure. 
These sets together with $X_n^{\prime}$ may form a rescaled Rohklin tower for $R$. 
Now, we specify how to define $R_{n+1}$ consistently. 
Define 
\[
\mathcal{C}_n=\{ \bigcap_{k=0}^{h_n-1} R_n^{-k} p_k : p_k\in P_{n-1}^{\prime} \vee P_{n}, p_k\subset R_n^{k}I_n \} . 
\]
The collection $\mathcal{C}_n$ generates a partition on $I_n$.  Let $P_{n}^{\prime}$ be the smallest 
partition generated by the collection:
\[
\bigvee_{k=0}^{h_n-1} \{ R_n^{k} p : p\in \mathcal{C}_n \} \vee \{ S_n^k (J_n\setminus I_n^*) \} . 
\] 

Define $\tau_n:X_n^{\prime}\to E_n$ as a measure preserving map between 
normalized spaces 
$(X_n^{\prime},\mathbb{B} \cap X_n^{\prime},\frac{\mu}{\mu (X_n^{\prime})})$ and 
$(E_n,\mathbb{B} \cap E_n,\frac{\mu}{\mu (E_n)})$. 
Extend $\tau_n$ to the new tower base, 
$$\tau_n:I_n \cup J_n\setminus I_n^*\to I_n$$
such that $\tau_n$ preserves normalized measure between 
$$\frac{\mu}{\mu (I_n \cup J_n\setminus I_n^*)}\mbox{ and }\frac{\mu}{\mu (I_n)}.$$
Define $\tau_n$ on the remainder of the tower consistently 
such that 
\begin{eqnarray*} 
\tau_n(x)= 
\left\{\begin{array}{ll}
R_n^{i}\circ \tau_n \circ R_n^{-i}(x) & \mbox{if $x\in R_n^i(I_n)$ for $0\leq i<h_n$} \\ 
R_n^{i}\circ \tau_n \circ S_n^{-i}(x) & \mbox{if $x\in S_n^{i}(J_n\setminus I_n^*)$ for $0\leq i<h_n$} . 
\end{array}
\right.
\end{eqnarray*}
Since $\tau_n$ is a contraction, we may require for all $p\in P_n^{\prime}$,
\[
\tau_n(p) \subset p .
\]
Define 
$R_{n+1}:X_{n+1}\to X_{n+1}$ as $R_{n+1}=\tau_n^{-1}\circ R_n\circ \tau_n$. Note 
\begin{eqnarray*} 
R_{n+1}(x)= 
\left\{\begin{array}{ll}
S_n(x) & \mbox{if $x\in S_n^{i}(J_n\setminus I_n^*)$ for $0\leq i<h_n-1$} \\ 
R_n(x) & \mbox{if $x\in R_n^{i}(I_n)$ for $0\leq i<h_n-1$} 
\end{array}
\right.
\end{eqnarray*}
Clearly, $R_{n+1}$ is isomorphic to $R_n$ and $R$.  
Set $b_{n+1} = b_n + {1}/{(n+1)}$, 
$Y_{n+1} = [b_n, b_{n+1})$ 
and transformation $S_{n+1}$ similar to the previous stages. 
Also, let 
\[
Q_n = \{ \tau_n (p) : p \in P_n^{\prime} \} . 
\]

\subsection{The Limiting Transformation}
Define the transformation 
$T_{n+1}:X_{n+1}\cup Y_{n+1}\to X_{n+1}\cup Y_{n+1}$ such that 
\begin{eqnarray*} 
T_{n+1}(x)= 
\left\{\begin{array}{ll}
R_{n+1}(x) & \mbox{if $x\in X_{n+1}$} \\ 
S_{n+1}(x) & \mbox{if $x\in Y_{n+1}$} 
\end{array}
\right.
\end{eqnarray*}
The set 
\[
D_n = \{ x\in X_{n+1} : T_{n+1} (x) \neq T_n (x) \}
\] 
is determined by the top levels 
of the Rokhlin towers, the residual and the transfer set. 
Note the transfer set has measure $d$. Since this set is used to adjust 
the size of the residuals between stages, it can be bounded below 
a constant multiple of $\epsilon_n$. Thus, there is a fixed constant $\kappa$, 
independent of $n$, such that 
$\mu (D_n) < \kappa (\epsilon_n + {1}/{h_n})$ . 
Since $\sum_{n=1}^{\infty} (\epsilon_n + {1}/{h_n}) < \infty$, 
$T(x) = \lim_{n\to \infty}T_n(x)$ exists almost everywhere, 
and preserves Lebesgue measure. 
Without loss of generality, we may assume $\kappa$ and $h_n$ 
are chosen such that for $n\in \natural$, 
\[
\mu (D_n) < \kappa \epsilon_n . 
\] 
Let $X^+ = \bigcup_{n=1}^{\infty} X_n$. 
Since 
\[
\mu(X^+) = \lim_{n\to \infty} (\mu(X_n) + \mu(Y_n)) = \infty,
\]
then $T$ is an invertible infinite measure preserving transformation. 
In the final section, we show there exist $h_n$ and $\epsilon_n$ 
such that $T:X^+ \to X^+$ is power rationally weakly mixing. 

\section{Isomorphism Chain Consistency}
Suppose $R$ is a weak mixing transformation on $(X,\mathcal{B},\mu)$ 
with rigidity sequence $\rho_n$.  
We will use the multiplexing procedure defined in the previous section 
to produce an invertible infinite measure preserving $T$ 
such that $T$ is rigid on $\rho_n$ and (power) rationally weakly mixing. 
In the definition of rational weak mixing, let $F=X_1 = X$ and assume without loss of generality 
that $\mu(F)=1$. Let $\mu_n$ be normalized Lebesgue probability measure on $X_n$. 
i.e. $\mu_n = {\mu} / {\mu(X_n)}$. 
Since each $R_n$ is weakly mixing and finite measure preserving on $X_n$, 
then for all $A,B\in P_n^{\prime}$, 
\[
\lim_{N\to \infty} \frac{1}{N}\sum_{i=0}^{N-1} | \mu_n(A\cap R_n^iB) - \mu_n(A)\mu_n(B) | = 0 . 
\]
If $u_i (n) = \mu(F\cap R_n^iF)$ and $a_N = \sum_{i=0}^{N-1} u_i (n)$, then 
for each $n\in \natural$, 
\[
\lim_{N\to \infty} \frac{a_N \mu(X_n)}{N} = 1 
\]
and 
\[
\lim_{N\to \infty} \frac{1}{a_N} \sum_{i=0}^{N-1} | u_i (n) - \frac{1}{\mu(X_n)} | = 0 . 
\]
This implies for all $A,B\in P_n^{\prime}$, 
\[
\lim_{N\to \infty} \frac{1}{a_N} \sum_{i=0}^{N-1} | \mu(A\cap R_n^iB) - \mu(A)\mu(B)u_i (n)| = 0 . 
\] 

Prior to establishing rational weak mixing, we prove 
a crucial lemma that was used in \cite{Towerplex1}. 
For $p\in P_n^{\prime}$,
$$
\frac{\mu (p)}{\mu (\tau_n (p))}=\frac{\mu (X_{n+1})}{\mu (X_n)}.
$$
It is straightforward to verify for any set $A$ measurable relative to $P_n^{\prime}$, 
\begin{align*}
\mu (A\triangle \tau_nA) &= \mu (A) - \mu (\tau_n(A)) \\ 
&\leq \mu (\tau_n A) [ \frac{\mu (X_{n+1})}{\mu (X_n)} - 1 ] 
= \frac{\mu (\tau_n A)}{\mu (X_n)} [ \mu (X_{n+1}) - \mu (X_n) ] . 
\end{align*}
and for any measurable set $C\subset X_n$, 
$$
| \mu (\tau_n^{-1}C) - \mu(C) | <  |\frac{\mu (X_{n+1})}{\mu (X_n)} - 1|.
$$
These two properties are used in the following lemma to show $R_{n+1}$ inherits dynamical 
properties from $R_n$ indefinitely over time. 
\begin{lemma}
\label{rescalinglem}
Suppose $\delta >0$ and $n\in \natural$ is chosen such that 
\begin{eqnarray*}
\epsilon_n + \mu (Y_n) < \frac{\delta}{6}.
\end{eqnarray*}
Then for $A,B\in Q_n$ and $i\in \natural$, the following holds:
\begin{enumerate}
\item $|\mu (R_{n+1}^iA\cap B)-\mu (A)\mu (B) u_i(n+1)| \\ 
< |\mu (R_{n}^iA\cap B)-\mu (A)\mu (B) u_i(n)| + [ {\delta} / {\mu (X_n)} ]$; 
\item $\mu (R_{n+1}^iA\triangle A) < \mu (R_{n}^iA\triangle A) + [ {\delta} / {2\mu (X_n)} ]$. 
\end{enumerate}
\end{lemma}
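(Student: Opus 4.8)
The plan is to reduce both parts to the single structural identity behind the construction, namely $R_{n+1}=\tau_n^{-1}\circ R_n\circ\tau_n$, which yields $R_{n+1}^i=\tau_n^{-1}\circ R_n^i\circ\tau_n$ for every $i\in\natural$, and then to bound the damage done by the two insertions of $\tau_n^{\pm1}$ using exactly the three estimates displayed just before the lemma: the identity $\mu(p)/\mu(\tau_np)=\mu(X_{n+1})/\mu(X_n)$ for atoms $p$ of $P_n'$, the bound $\mu(A\triangle\tau_nA)\le\frac{\mu(\tau_nA)}{\mu(X_n)}\bigl(\mu(X_{n+1})-\mu(X_n)\bigr)$, and the bound $|\mu(\tau_n^{-1}C)-\mu(C)|<|\mu(X_{n+1})/\mu(X_n)-1|$. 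Throughout, the relevant facts about $\tau_n$ are that it is a bijection mod null sets, that it contracts $\mu$ by the uniform factor $\mu(X_n)/\mu(X_{n+1})$ off the transfer set $X_n'\setminus E_n$ (whose measure is at most a fixed multiple of $\epsilon_n$, by the estimate on $\mu(D_n)$ from the previous section), and that $\mu(X_{n+1})-\mu(X_n)=\mu(Y_n)$; consequently each replacement of a $\tau_n^{\pm1}$ by the identity moves $\mu$-mass by at most a fixed multiple of $(\epsilon_n+\mu(Y_n))/\mu(X_n)$, and — this is precisely why one conjugates rather than comparing the diverging $R_n$- and $R_{n+1}$-orbits — this estimate is completely uniform in $i$.

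First I would prove (1). Fix $A,B\in Q_n$, write $A=\tau_n\widehat A$, $B=\tau_n\widehat B$ with $\widehat A,\widehat B$ measurable with respect to $P_n'$, and note $R_{n+1}^iA\cap B=\tau_n^{-1}\bigl(R_n^i\tau_nA\bigr)\cap B$. Applying the bijection $\tau_n$ to this set turns it into $R_n^i(\tau_nA)\cap\tau_nB$, so, accounting for the rescaling factor of $\tau_n$ and its non-uniformity on the set of measure $\le\kappa\epsilon_n$, one gets
\[
\Bigl|\mu(R_{n+1}^iA\cap B)-\tfrac{\mu(X_{n+1})}{\mu(X_n)}\,\mu\bigl(R_n^i(\tau_nA)\cap\tau_nB\bigr)\Bigr|<\frac{\delta}{6\mu(X_n)}.
\]
Replacing $\tau_nA$ by $A$ and $\tau_nB$ by $B$ costs at most $\mu(Y_n)/\mu(X_n)<\delta/(6\mu(X_n))$ per replacement, by the $\mu(A\triangle\tau_nA)$-bound and invariance of $\mu$ under $R_n$; dropping the factor $\mu(X_{n+1})/\mu(X_n)=1+\mu(Y_n)/\mu(X_n)$ costs a further $\mu(Y_n)/\mu(X_n)<\delta/(6\mu(X_n))$. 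Hence $|\mu(R_{n+1}^iA\cap B)-\mu(R_n^iA\cap B)|<\delta/(2\mu(X_n))$. Running the identical computation with $A=B=F=X_1$ gives $|u_i(n+1)-u_i(n)|=|\mu(F\cap R_{n+1}^iF)-\mu(F\cap R_n^iF)|<\delta/(2\mu(X_n))$, and since $\mu(A)\mu(B)\le 1$ the same bound holds for $\mu(A)\mu(B)\,|u_i(n+1)-u_i(n)|$. Part (1) then follows from the triangle inequality
\begin{align*}
\bigl|\mu(R_{n+1}^iA\cap B)-\mu(A)\mu(B)u_i(n+1)\bigr|
&\le\bigl|\mu(R_{n}^iA\cap B)-\mu(A)\mu(B)u_i(n)\bigr| \\
&\quad+\bigl|\mu(R_{n+1}^iA\cap B)-\mu(R_n^iA\cap B)\bigr|+\mu(A)\mu(B)\bigl|u_i(n+1)-u_i(n)\bigr|.
\end{align*}
Part (2) is the same procedure applied to $\mu(R_{n+1}^iA\triangle A)=\mu\bigl(\tau_n^{-1}(R_n^i\tau_nA)\triangle A\bigr)$: transfer through $\tau_n$, use $\mu(\tau_nA\triangle A)\le\mu(Y_n)$ together with the $P_n'$-measurable form of the $|\mu(\tau_n^{-1}C)-\mu(C)|$-bound (so that $\mu(\tau_n^{-1}C\triangle C)$ is controlled when $C$ is assembled from the stage-$n$ partition), and combine with $\mu(\widehat A\triangle A)\le\mu(Y_n)$; only two or three replacements occur, producing the constant $\delta/(2\mu(X_n))$.

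The step I expect to be the main obstacle is making the phrase ``$\tau_n$ contracts $\mu$ by $\mu(X_n)/\mu(X_{n+1})$ off a set of measure $\le\kappa\epsilon_n$'' precise and checking that it survives an application of $R_n^i$. The map $\tau_n$ is a uniform contraction only on unions of atoms of $P_n'$; on the transfer set and on $E_n$ it redistributes mass by a different ratio, and once $R_n^i$ has been applied the set $R_n^i(\tau_nA)$ is in general no longer $P_n'$-measurable, so in the corresponding step one must fall back on the cruder bound $|\mu(\tau_n^{-1}C)-\mu(C)|<|\mu(X_{n+1})/\mu(X_n)-1|$, valid for \emph{every} measurable $C\subset X_n$, instead of a symmetric-difference estimate. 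Checking that these cruder bounds still close the loop — so that the accumulated error stays below the handful of $\delta/(6\mu(X_n))$-sized terms budgeted by the hypothesis $\epsilon_n+\mu(Y_n)<\delta/6$ — is the only delicate bookkeeping; it is carried out exactly as for the analogous lemma in \cite{Towerplex1}, and everything else is the formal algebra of conjugation by a measure isomorphism.
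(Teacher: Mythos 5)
Your argument is correct and is essentially the paper's proof: both rest on the conjugation identity $R_{n+1}^i=\tau_n^{-1}\circ R_n^i\circ\tau_n$, the three estimates displayed before the lemma (which are uniform in $i$), the same computation run with $F=X_1$ to compare $u_i(n+1)$ with $u_i(n)$, and a final triangle inequality using $\mu(A)\mu(B)\le 1$. The only cosmetic difference is directional: the paper replaces $A,B\in Q_n$ by $A'=\tau_n^{-1}A,\ B'=\tau_n^{-1}B$ (so that $\tau_nA'=A$, $\tau_nB'=B$ exactly, the rescaling factor never appears, and the budget is exactly three terms of size $\delta/(6\mu(X_n))$ per half), whereas you push forward by $\tau_n$, compare with $\tau_nA,\tau_nB$, and then drop the factor $\mu(X_{n+1})/\mu(X_n)$ --- which closes with the same constants only because, by the choice of $d_n$, the extra ``non-uniformity'' term you budget is in fact zero.
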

\begin{proof} 
For $A,B\in Q_n$, let $A^{\prime}= \tau_n^{-1} A$ and 
$B^{\prime}=\tau_n^{-1} B$. 
Thus, 
$\mu(A^{\prime}\triangle A) = \mu(\tau_n^{-1}(A\setminus \tau_nA)) < {\delta} / {6\mu (X_n)}$ 
and $\mu (B^{\prime}\triangle B)<\frac{\delta}{6\mu (X_n)}$. 
By applying the triangle inequality several times, we get the following approximation: 
\begin{eqnarray*} 
| \mu (R_{n+1}^iA\cap B) &-& \mu(R_n^i A \cap B) | \\ 
&\leq& | \mu (R_{n+1}^iA^{\prime}\cap B^{\prime}) - \mu(R_n^i A \cap B) | 
+ \frac{\delta}{3\mu (X_n)} \\ 
&=& \mu (\tau_n^{-1}R_{n}^i\tau_nA^{\prime}\cap B^{\prime}) - \mu(R_n^i A \cap B) | + \frac{\delta}{3\mu (X_n)} \\ 
&=& \mu (\tau_n^{-1}(R_{n}^i\tau_nA^{\prime} \cap \tau_n B^{\prime})) - \mu(R_n^i A \cap B) | + \frac{\delta}{3\mu (X_n)} \\ 
&=& | \mu( \tau_n^{-1}(R_n^i A \cap B)) - \mu(R_n^i A \cap B) | + \frac{\delta}{3\mu (X_n)} \\ 
&<& \frac{\delta}{2\mu (X_n)} . 
\end{eqnarray*}
Similarly, 
$$
| \mu (R_{n+1}^iF\cap F) - \mu(R_n^i F \cap F) | < \frac{\delta}{2\mu (X_n)} . 
$$
Hence, 
$$|\mu (R_{n+1}^iA\cap B)-\mu (A)\mu (B)u_i(n+1)| < |\mu (R_{n}^iA\cap B)-\mu (A)\mu (B)u_i(n)| + 
\frac{\delta}{\mu (X_n)}.$$
The second part of the lemma can be proven in a similar fashion using the triangle inequality. 
\begin{align*}
| \mu(R_{n+1}^iA\triangle A) & - \mu (R_{n}^iA\triangle A) | \leq
| \mu (R_{n+1}^iA^{\prime}\triangle A^{\prime}) - \mu (R_{n}^iA\triangle A) | + \frac{\delta}{3\mu (X_n)} \\ 
&= | \mu (\tau_n^{-1}R_n^i \tau_nA^{\prime}\triangle A^{\prime}) - \mu (R_{n}^iA\triangle A) | + \frac{\delta}{3\mu (X_n)} \\ 
&= | \mu (\tau_n^{-1} (R_n^i A \triangle A)) - \mu (R_n^i A \triangle A) | + \frac{\delta}{3\mu (X_n)} \\ 
&< \frac{\delta}{2\mu (X_n)} 
\end{align*}
Therefore, 
$$\mu(R_{n+1}^iA\triangle A) < \mu(R_{n}^iA\triangle A) + \frac{\delta}{2\mu (X_n)}$$ 
and our proof is complete. 
\end{proof}

\section{Approximation}
For probability preserving transformations, 
if the transformation is rigid on a dense collection 
of measurable sets, then the transformation is rigid on all measurable sets. 
Similarly, if a probability preserving transformation is mixing on a fixed sequence 
for all measurable sets from a dense collection, then the transformation is mixing 
on the same sequence. 
Since the normalizing term $a_N$ in the rationally weakly mixing condition may grow 
at a rate much slower than $N$, it is not clear this condition will hold for all 
measurable subsets $A\subseteq F$, when it holds for a dense collection of sets 
contained in $F$.  In this section, we give conditions that allow extension 
of the rational weak mixing condition from a dense collection of sets in $F$ 
to all measurable sets contained in $F$. Let $P$ be a dense collection of sets, 
each a subset of $F$. 

\begin{lemma}
\label{approx1}
Suppose there exist a sequence of measurable sets 
$F=X_1 \subset X_2 \subset \ldots$, and a sequence of natural numbers 
$M_1 < M_2 < \ldots$ such that for each $A\in P$ 
and any sequence $N_n$ satisfying $M_n \leq N_n < M_{n+1}$, 
\begin{equation}
\lim_{n \to \infty} \frac{\mu (X_n)^{2}}{N_n} \sum_{i=0}^{N_n-1} 
| \mu (A\cap T^iA) - \mu (A)^2 \frac{1}{\mu (X_n)} | = 0 . \label{frwm}
\end{equation}
Then for any measurable set $E\subseteq F$ and $A\in P$, 
\[
\lim_{n \to \infty} \frac{\mu (X_n)}{N_n} \sum_{i=0}^{N_n-1} 
| \mu (E\cap T^iA) - \mu (E) \mu (A) \frac{1}{\mu (X_n)} | = 0 . 
\]
\end{lemma}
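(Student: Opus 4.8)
The plan is to read the statement as an approximation result: pass from a general measurable $E\subseteq F$ to the case $E\in P$, where the displayed sum is exactly the off-diagonal form of rational weak mixing for the pair $(E,A)$, and absorb the difference by a second-moment estimate extracted from \eqref{frwm}. Fix $A\in P$; we may assume $\mu(A)>0$, and write $S_N\mathbf{1}_A=\sum_{i=0}^{N-1}\mathbf{1}_{T^iA}$, so that $\sum_{i=0}^{N-1}\mu(E\cap T^iA)=\int_E S_N\mathbf{1}_A\,d\mu$ for every measurable $E$.

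The first ingredient is the \emph{pair case}: for $B\in P$ one needs $\frac{\mu(X_n)}{N_n}\sum_{i=0}^{N_n-1}|\mu(B\cap T^iA)-\mu(B)\mu(A)/\mu(X_n)|\to 0$. For the multiplexed transformation $T$ this comes from the construction itself: the weak mixing of each $R_n$ on $P_n^{\prime}$ yields $\frac{\mu(X_n)}{N}\sum_{i<N}|\mu(B\cap R_n^iA)-\mu(B)\mu(A)/\mu(X_n)|\to 0$ via the displayed identities preceding Lemma \ref{rescalinglem} (together with $a_N\mu(X_n)/N\to1$), and Lemma \ref{rescalinglem}, applied repeatedly from stage $n$ on, transfers this to $T$ over each window $M_n\le N_n<M_{n+1}$. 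Only the diagonal information in \eqref{frwm} will be used quantitatively.

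The second ingredient is a second-moment bound for $S_{N_n}\mathbf{1}_A$. Since $T$ is invertible and measure preserving, $\mu(A\cap T^{-k}A)=\mu(A\cap T^kA)$, so
\[
\int(S_{N_n}\mathbf{1}_A)^2\,d\mu=\sum_{|k|<N_n}(N_n-|k|)\,\mu(A\cap T^kA)\le 2N_n\sum_{k=0}^{N_n-1}\mu(A\cap T^kA),
\]
and \eqref{frwm} gives $\sum_{k=0}^{N_n-1}\mu(A\cap T^kA)=N_n\mu(A)^2/\mu(X_n)+o(N_n/\mu(X_n)^2)$, hence $\int(S_{N_n}\mathbf{1}_A)^2\,d\mu\le 4N_n^{2}\mu(A)^2/\mu(X_n)$ for all large $n$. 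Now let $E\subseteq F$ be measurable, let $B\in P$ with $B\subseteq F$, and set $G=E\triangle B$. The triangle inequality, averaged over $i<N_n$, together with $\sum_{i<N_n}\mu(G\cap T^iA)=\int_G S_{N_n}\mathbf{1}_A\le\mu(G)^{1/2}\bigl(\int(S_{N_n}\mathbf{1}_A)^2\,d\mu\bigr)^{1/2}$ and $|\mu(E)-\mu(B)|\le\mu(G)$, gives
\begin{align*}
\frac{\mu(X_n)}{N_n}\sum_{i=0}^{N_n-1}\Bigl|\mu(E\cap T^iA)-\frac{\mu(E)\mu(A)}{\mu(X_n)}\Bigr|
&\le\frac{\mu(X_n)}{N_n}\sum_{i=0}^{N_n-1}\Bigl|\mu(B\cap T^iA)-\frac{\mu(B)\mu(A)}{\mu(X_n)}\Bigr|\\
&\quad+2\mu(A)\bigl(\mu(G)\mu(X_n)\bigr)^{1/2}+\mu(G)\mu(A).
\end{align*}
Since $P$ is countable (in the application it is generated by the refining partitions of the construction), I would then choose, by a diagonal argument, sets $B_n\in P$ with $B_n\subseteq F$, $\mu(E\triangle B_n)\mu(X_n)\to 0$, and such that the pair-case term with $B=B_n$ still tends to $0$ --- possible because that limit holds for each fixed member of $P$ while $\mu(X_n)\to\infty$. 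With such $B_n$ all three terms on the right-hand side tend to $0$, which is the assertion.

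The step I expect to be the main obstacle is the second-moment bound and what it forces. The estimate $\int(S_{N_n}\mathbf{1}_A)^2\,d\mu=O(N_n^2\mu(A)^2/\mu(X_n))$ is a full factor $\mu(X_n)$ weaker than the $O(N_n^2\mu(A)^2/\mu(X_n)^2)$ that an independence heuristic predicts, and nothing better can be extracted from \eqref{frwm} alone: in infinite measure $T^iA$ essentially returns onto itself along rigidity (and near-rigidity) times. Consequently a \emph{fixed}-quality approximant $B$ of $E$ contributes an error $\asymp\mu(A)(\mu(E\triangle B)\mu(X_n))^{1/2}$, which is unbounded in $n$, so the quality of the approximation must be sharpened as $n$ grows. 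The delicate point is therefore the joint diagonal choice of $B_n$: it must approximate $E$ to within $o(1/\mu(X_n))$ and yet lie in the part of $P$ on which the pair-case estimate has effectively converged by stage $n$ --- which is where the quantitative control of the construction's parameters $\epsilon_n,h_n$ enters.
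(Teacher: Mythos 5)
There is a genuine gap, and it sits exactly where you flagged it: the ``joint diagonal choice of $B_n$.'' Your error term for replacing $E$ by an approximant $B$ is $2\mu(A)\bigl(\mu(E\triangle B)\mu(X_n)\bigr)^{1/2}$, so you need $B_n\in P$ with $\mu(E\triangle B_n)=o(1/\mu(X_n))$ \emph{and} with the pair-case sum for $(B_n,A)$ already converged by stage $n$. These two demands pull in opposite directions: approximants of accuracy $o(1/\mu(X_n))$ generically lie arbitrarily deep in the enumeration of $P$, and neither the hypotheses of the lemma nor the construction gives any rate at which the pair-case estimate converges as a function of the index of $B$ in $P$. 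A diagonal argument only works when you enumerate $P$ first and let $n$ catch up; here the required $B_n$ runs away faster than $n$ can catch it, and nothing guarantees the two schedules can be reconciled. A secondary objection: the pair case for $(B,A)$ with $B\in P$ is not among the hypotheses of Lemma \ref{approx1} (which assumes only the diagonal condition \eqref{frwm} for each single $A\in P$); importing it from the weak mixing of $R_n$ and Lemma \ref{rescalinglem} means you are no longer proving the stated lemma but a construction-specific variant.

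The root cause is that you applied Cauchy--Schwarz to the \emph{uncentered} sum $S_{N_n}I_A$, whose second moment is indeed only $O(N_n^2\mu(A)^2/\mu(X_n))$; but the hypothesis \eqref{frwm} controls the \emph{centered} second moment $\int_{X_n}\bigl(\tfrac{1}{N_n}S_{N_n}I_A-\mu(A)/\mu(X_n)\bigr)^2d\mu$ at the much smaller scale $o(1/\mu(X_n)^2)$, since expanding the square produces exactly the double sum $\sum_{i,j}\bigl(\mu(T^iA\cap T^jA)-\mu(A)^2/\mu(X_n)\bigr)$. This is the paper's route: assume the conclusion fails with margin $2\delta$, extract $\Gamma_n\subseteq\{0,\dots,N_n-1\}$ with $|\Gamma_n|\ge\delta N_n$ on which the deviations $\mu(E\cap T^iA)-\mu(E)\mu(A)/\mu(X_n)$ have one sign (this disposes of the absolute values), write the signed sum as $\int_E\bigl(\tfrac{1}{|\Gamma_n|}\sum_{i\in\Gamma_n}I_{T^iA}-\mu(A)/\mu(X_n)\bigr)d\mu$, and apply Cauchy--Schwarz against $I_E$: the bound is $\sqrt{\mu(E)}$ times the square root of $\tfrac{\mu(X_n)^2}{|\Gamma_n|^2}\sum_{i,j\in\Gamma_n}|\mu(T^iA\cap T^jA)-\mu(A)^2/\mu(X_n)|$, which is at most $(2/\delta)$ times the quantity in \eqref{frwm} and hence tends to $0$, a contradiction. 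No approximation of $E$ by elements of $P$ is needed at all, and only the diagonal hypothesis for $A$ is used. I recommend replacing your second and third ingredients by this centered Cauchy--Schwarz argument; your second-moment computation is correct but is computing the wrong moment.
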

\begin{proof}
If not true, then there exists $\delta > 0$ and $\ell \in \natural$ 
such that for $n \geq \ell$, 
\[
\frac{\mu (X_n)}{N_n} \sum_{i=0}^{N_n-1} 
| \mu(E\cap T^iA) - \mu(E)\mu(A)\frac{1}{\mu(X_n)} | > 2\delta . 
\]
There exists $\Gamma_n \subseteq \{0,1,\ldots ,N_n-1\}$ such that 
$| \Gamma_n | \geq \delta N_n$, 

\noindent 
$\mu (E\cap T^iA) - {\mu(E)\mu(A)}/{\mu(X_n)} \geq 0$ (or $\leq 0$) 
for $i\in \Gamma_n$ and 
\[
\frac{\mu (X_n)}{|\Gamma_n|} 
\sum_{i\in \Gamma_n} (\mu(E\cap T^iA) - \mu(E)\mu(A)\frac{1}{\mu(X_n)}) > \delta .
\]
On the other hand, we can use the Cauchy-Schwarz inequality to obtain,
\begin{align}
&\frac{\mu(X_n)}{|\Gamma_n|} \sum_{i\in \Gamma_n} 
(\mu(E\cap T^iA) - \mu(E)\mu(A)\frac{1}{\mu(X_n)}) \\ 
&\leq \mu(X_n) \int_{X_n} 
(\frac{1}{|\Gamma_n|}\sum_{i\in \Gamma_n} 
I_{T^iA}(x) - \frac{\mu(A)}{\mu(X_n)}) I_E(x) d\mu \\ 
&\leq \mu(X_n)[\int_{X_n} 
(\frac{1}{|\Gamma_n|}\sum_{i\in \Gamma_n} 
I_{T^iA}(x) - \frac{\mu(A)}{\mu(X_n)})^2 d\mu]^{\frac{1}{2}} 
[\int_{X_n} I_E(x)d\mu]^{\frac{1}{2}} \\ 
&\leq [ 
\frac{\mu(X_n)^2}{|\Gamma_n|^2} \sum_{i,j\in \Gamma_n} 
|\mu(T^iA\cap T^jA) - \frac{\mu(A)^2}{\mu(X_n)} | ]^{\frac{1}{2}} 
\sqrt{\mu(E)} . \label{frwm2}
\end{align}
However, condition (\ref{frwm}) implies that expression 
(\ref{frwm2}) converges to zero.
\end{proof}
The following lemma uses Lemma \ref{approx1} to extend 
the rational weak mixing condition to all measurable 
subsets of $F$. 
\begin{lemma}
\label{approx2}
If $T$ is conservative ergodic and 
satisfies the same conditions of Lemma \ref{approx1}, 
then $T$ is rationally weakly mixing. 
In particular, for any measurable sets $D,E\subseteq F$ 
and $M_n \leq N_n < M_{n+1}$, 
\[
\lim_{n \to \infty} \frac{\mu (X_n)}{N_n} \sum_{i=0}^{N_n-1} 
| \mu (E\cap T^iD) - \mu (E) \mu (D) \frac{1}{\mu (X_n)} | = 0 . 
\]
\end{lemma}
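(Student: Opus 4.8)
The plan is to bootstrap from Lemma~\ref{approx1}, which already handles the case in which one of the two sets lies in the dense collection $P$, using only a triangle inequality; the one subtlety is that the approximation error must not acquire a stray factor of $\mu(X_n)$. \emph{Step~1 (symmetrise).} First I would observe that Lemma~\ref{approx1} applies verbatim to $T^{-1}$: for $A\in P$ we have $\mu(A\cap T^{-i}A)=\mu(T^iA\cap A)=\mu(A\cap T^iA)$, so hypothesis (\ref{frwm}) is unchanged under $T\mapsto T^{-1}$, and the data $X_n$, $M_n$, $P$ are the same. Hence, for every $A\in P$, every measurable $E\subseteq F$, and every $M_n\le N_n<M_{n+1}$,
\[
\lim_{n\to\infty}\frac{\mu(X_n)}{N_n}\sum_{i=0}^{N_n-1}\Bigl|\mu(E\cap T^iA)-\tfrac{\mu(E)\mu(A)}{\mu(X_n)}\Bigr|=0 ,
\]
and the same holds with $\mu(A\cap T^iE)$ in place of $\mu(E\cap T^iA)$. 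Taking $A=F$ --- which we may include in $P$ --- gives $\frac{\mu(X_n)}{N_n}\sum_{i=0}^{N_n-1}\mu(F\cap T^iE)\to\mu(E)$ for every measurable $E\subseteq F$.

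\emph{Step~2 (extend to all pairs).} Fix measurable $D,E\subseteq F$ and $\eta>0$, and pick $A\in P$ with $\mu(D\triangle A)<\eta$. Since $D\triangle A\subseteq F$, for each $i$ we have $|\mu(E\cap T^iD)-\mu(E\cap T^iA)|\le\mu(E\cap T^i(D\triangle A))\le\mu(F\cap T^i(D\triangle A))$, so
\[
\frac{\mu(X_n)}{N_n}\sum_{i=0}^{N_n-1}\Bigl|\mu(E\cap T^iD)-\tfrac{\mu(E)\mu(D)}{\mu(X_n)}\Bigr|
\le\frac{\mu(X_n)}{N_n}\sum_{i=0}^{N_n-1}\Bigl|\mu(E\cap T^iA)-\tfrac{\mu(E)\mu(A)}{\mu(X_n)}\Bigr|
+\frac{\mu(X_n)}{N_n}\sum_{i=0}^{N_n-1}\mu(F\cap T^i(D\triangle A))+\mu(E)\,|\mu(A)-\mu(D)| .
\]
As $n\to\infty$, the first term on the right tends to $0$ by Step~1, the second tends to $\mu(D\triangle A)<\eta$ by Step~1 applied to the pair $(D\triangle A,F)$, and the third is $<\eta$; hence the left side has $\limsup\le2\eta$, and since $\eta$ is arbitrary, the displayed identity of the lemma holds for all measurable $D,E\subseteq F$. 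This is the crux: the trivial bound $\mu(E\cap T^i(D\triangle A))\le\mu(D\triangle A)$ would leave the unusable error $\mu(X_n)\eta$, whereas dominating by $\mu(F\cap T^i(D\triangle A))$ and invoking Lemma~\ref{approx1} makes the $\mu(X_n)$ cancel, so no diagonal choice $A=A_n$ is needed.

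\emph{Step~3 (intrinsic normalisation).} It remains to replace $\mu(X_n)/N_n$ and $1/\mu(X_n)$ by $1/a_{N_n}(F)$ and $u_k(F)=\mu(F\cap T^kF)$. Using that $T$ coincides with the weakly mixing, finite measure preserving $R_n$ on $X_n$ for at least the first $M_{n+1}$ iterates up to a set of measure $o(1)$ --- which is arranged by taking the cutoffs $M_n$ large, via Borel--Cantelli on the sets $D_n$ --- the Isomorphism Chain Consistency estimates give $a_{N_n}(F)\,\mu(X_n)/N_n\to1$ and $\frac{\mu(X_n)}{N_n}\sum_{k<N_n}|u_k(F)-\tfrac1{\mu(X_n)}|\to0$ for $M_n\le N_n<M_{n+1}$. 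Splitting $|\mu(A\cap T^kB)-\mu(A)\mu(B)u_k(F)|$ along these two estimates and feeding in the identity of Step~2 with $(E,D)=(A,B)$ gives $\frac1{a_{N_n}(F)}\sum_{k<N_n}|\mu(A\cap T^kB)-\mu(A)\mu(B)u_k(F)|\to0$ for all measurable $A,B\subseteq F$; since every $n\in\natural$ lies in some $[M_j,M_{j+1})$ and $T$ is conservative ergodic (so $a_n(F)\to\infty$), this is exactly rational weak mixing. I expect this last bookkeeping to be the fussiest part, because $a_n(F)$ may grow much more slowly than $n$, so the comparison must hold uniformly over the whole window $M_n\le N_n<M_{n+1}$ --- which is precisely what the staged choice of the $M_n$ secures.
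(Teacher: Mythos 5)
Your overall strategy is the paper's: approximate by sets from the dense collection and transfer Lemma \ref{approx1} through the triangle inequality, arranged so that the factor $\mu(X_n)$ never multiplies a raw approximation error. Within that strategy your decomposition is genuinely different and in one respect cleaner: you approximate only $D$ by $A\in P$ and leave $E$ arbitrary, which is exactly the situation Lemma \ref{approx1} covers, so a single error term arises; the paper instead approximates both sets, assumes (``without loss of generality'') the containments $D\subseteq A$ and $E\subseteq B$, and then each error term $\mu((A\setminus D)\cap T^iB)$, $\mu(A\cap T^i(B\setminus E))$ contains a $P$-set. Your Step 1 also makes explicit the reflected form of Lemma \ref{approx1} (apply it to $T^{-1}$, using $\mu(A\cap T^{-i}A)=\mu(A\cap T^iA)$), which the paper uses only tacitly; that is a point in your favor. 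Your Step 3 is a needed bridge (the paper's proof establishes only the displayed limit, not literally the definition of rational weak mixing), but it requires no construction-specific input: once the display holds for all measurable $D,E\subseteq F$, apply it with $D=E=F$ to get $\frac{\mu(X_n)}{N_n}\sum_{i<N_n}|\mu(F\cap T^iF)-\mu(F)^2/\mu(X_n)|\to 0$, hence $\mu(X_n)a_{N_n}(F)/N_n\to 1$, and the change of normalization to $a_{N_n}(F)$ and $u_i(F)$ follows; invoking $R_n$, Borel--Cantelli on the sets $D_n$, and the Isomorphism Chain Consistency estimates belongs to the proof of Theorem \ref{infinite_towerplex}, not to this abstract lemma.

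The one step you should not pass over is ``$A=F$, which we may include in $P$.'' Your error term $\frac{\mu(X_n)}{N_n}\sum_{i}\mu(F\cap T^i(D\triangle A))$ is controlled by (the reflected) Lemma \ref{approx1} only if hypothesis (\ref{frwm}) holds for the set $F$ itself, and this is not among the hypotheses of Lemmas \ref{approx1} and \ref{approx2}: condition (\ref{frwm}) is assumed only for $A\in P$, and it does not follow from density of $P$, because transferring (\ref{frwm}) from some $A_m\in P$ with $\mu(A_m\triangle F)$ small to $F$ costs an error of order $\mu(X_n)\,\mu(A_m\triangle F)$, which blows up for fixed $m$; indeed (\ref{frwm}) for $F$ is stronger by a factor $\mu(X_n)$ than anything the lemma's conclusion provides, so no bootstrapping is available. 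As a proof of the lemma as stated this is a gap, though an easily repaired one: in the paper's application one can simply impose condition (\ref{cond1}) for $A=F=X_1$ as well (legitimate, since $R_n$ is finite measure preserving and weak mixing), or one can avoid $F$ altogether as the paper does by approximating both sets with $D\subseteq A$, $E\subseteq B$ --- though that containment ``without loss of generality'' carries its own, comparable, burden of justification. State the extra requirement on $F$ explicitly (or justify the containment reduction) and your argument is complete.
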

\begin{proof}
Let $D$ and $E$ be measurable subsets of $F$, 
and let $\eta > 0$. 
Choose $A,B \in P$ such that 
$\mu (A\triangle D) < \eta$ and $\mu (B\triangle E) < \eta$. 
Without loss of generality, let 
$D=A\cap D$ and $E=B\cap E$. 
A straightforward application of the triangle inequality 
gives the following bounds,
\begin{enumerate}
\item $\frac{\mu(X_n)}{N_n} \sum_{i=0}^{N_n-1} 
| \mu(A)\mu(B)\frac{1}{\mu(X_n)} - \mu(A)\mu(E)\frac{1}{\mu(X_n)} | 
< \eta$,
\item $\frac{\mu(X_n)}{N_n} \sum_{i=0}^{N_n-1} 
| \mu(A)\mu(B)\frac{1}{\mu(X_n)} - \mu(D)\mu(B)\frac{1}{\mu(X_n)} | 
< \eta$,
\item $\frac{\mu(X_n)}{N_n} \sum_{i=0}^{N_n-1} 
| \mu(A)\mu(B)\frac{1}{\mu(X_n)} - \mu(D)\mu(E)\frac{1}{\mu(X_n)} | 
< 2\eta$. 
\end{enumerate}
From Lemma \ref{approx1}, we have that 
\begin{align}
\frac{\mu(X_n)}{N_n} & \sum_{i=0}^{N_n-1} 
| \mu(D\cap T^iB) - \mu(A\cap T^iB) | \\ 
& \leq \frac{\mu(X_n)}{N_n} \sum_{i=0}^{N_n-1} 
| \mu(D\cap T^iB) - \mu(D)\mu(B)\frac{1}{\mu(X_n)} | \\ 
& + \frac{\mu(X_n)}{N_n}  \sum_{i=0}^{N_n-1} 
| \mu(A\cap T^iB) - \mu(A)\mu(B)\frac{1}{\mu(X_n)} | \\ 
& + \frac{\mu(X_n)}{N_n} \sum_{i=0}^{N_n-1} 
| \mu(A)\mu(B)\frac{1}{\mu(X_n)} - \mu(D)\mu(B)\frac{1}{\mu(X_n)} | \\ 
&\longrightarrow \eta_1 \leq \eta 
\end{align}
for some real number $\eta_1 \geq 0$. 
Similarly, 
\[
\frac{\mu(X_n)}{N_n} \sum_{i=0}^{N_n-1} 
| \mu(A\cap T^iE) - \mu(A\cap T^iB) | \to \eta_2 
\]
for some nonnegative real number $\eta_2 \leq \eta$. 
Finally, we have 
\begin{align}
\frac{\mu(X_n)}{N_n} & \sum_{i=0}^{N_n-1} 
| \mu(D\cap T^iE) - \mu(D)\mu(E)\frac{1}{\mu(X_n)} | \\ 
&\leq \frac{\mu(X_n)}{N_n} \sum_{i=0}^{N_n-1} 
| \mu(A\cap T^iB) - \mu(A)\mu(B)\frac{1}{\mu(X_n)} | \\ 
&+ \frac{\mu(X_n)}{N_n} \sum_{i=0}^{N_n-1} 
| \mu(A)\mu(B)\frac{1}{\mu(X_n)} - \mu(D)\mu(E)\frac{1}{\mu(X_n)} | \\ 
&+ \frac{\mu(X_n)}{N_n} \sum_{i=0}^{N_n-1} \Big( 
\mu((A\setminus D)\cap T^iB) 
+ \mu(A\cap T^i(B\setminus E)) \Big) \\ 
&\longrightarrow \eta_3 \leq 4\eta . 
\end{align}
\end{proof}

\section{Rational Weak Mixing and Rigid} 
\label{main1}
To ensure conservativity and ergodicity, 
the same technique from \cite{Towerplex1} 
may be used, or directly modify the choice 
of $M_n$, $\epsilon_n$ and $h_n$ below, to force $F=X_1$ to sweep out. 
Suppose $\delta_n > 0$ such that $\lim_{n\to \infty} \delta_n = 0$. 
Fix $n\in \natural$. 
Suppose $M_{n-1}$, $h_{n-1}$ and $\epsilon_{n-1}$ have been chosen. 
Choose $M_n > \max{ \{ h_{n-1}, M_{n-1} \} }$ such that 
for all $A\in P_n^{\prime}$ and $N \geq M_n$, 
\begin{eqnarray}
\frac{\mu(X_n)^2}{N} \sum_{i=0}^{N-1} 
| \mu(A\cap R_n^iA) - \mu(A)^2\frac{1}{\mu(X_n)} | < \delta_n . \label{cond1}
\end{eqnarray} 
Choose $\epsilon_n > 0$ and $h_n > M_n$ such that 
\[
\epsilon_n n M_n < \epsilon_{n-1} \ \ \ \mbox{and}\ \ \ 
\frac{1}{h_n} n M_n < \epsilon_{n-1} . 
\]
\begin{proof}[Proof of rational weak mixing] 
Fix $k\in \natural$ and $A\in P_k^{\prime}$. 
Suppose $N_n \in \natural$ such that $M_n \leq N_n < M_{n+1}$. 
By using the first approximation from Lemma \ref{rescalinglem}, 
\[
\lim_{n\to \infty} \frac{\mu(X_{n+1})^2}{N_n} \sum_{i=0}^{{N_n} - 1} | \mu(A\cap R_{n+1}^iA) 
- \mu(A)^2\frac{1}{\mu(X_{n+1})} | = 0 . 
\] 
Set $E_{n+1}=\{x\in X_n: T_{n+2}(x)\neq T_{n+1}(x)\}$. Let 
$$E_{n+1}^{\prime}=\bigcup_{i=0}^{M_{n+1}-1}[T_{n+2}^{-i}E_{n+1}\cup T_{n+1}^{-i}E_{n+1}]$$
Thus, $\mu (E_{n+1}^{\prime}) < 2M_{n+1} \kappa \epsilon_{n+1}$. 
For $x\notin E_{n+1}^{\prime}$, $T_{n+2}^i(x)=T_{n+1}^i(x)$ for $0\leq i\leq M_{n+1}$. 
Let $E_{n+1}^{\prime\prime} = \bigcup_{k=n+1}^{\infty}E_k^{\prime}$. 
For $x\notin E_{n+1}^{\prime\prime}$ and $0\leq i\leq M_{n+1}$, 
$T^i(x) = T_{n+1}^i(x)$. 
Also,
$$\mu (E_{n+1}^{\prime\prime} ) < \sum_{k=n+1}^{\infty} 2M_{k}\kappa \epsilon_k 
< \frac{1}{n+1}\sum_{k=n+1}^{\infty}2\kappa \epsilon_{k-1}$$
and $\sum_{k=n+1}^{\infty}2\kappa \epsilon_{k-1} \to 0$ as $n\to \infty$. 
Hence, 
\begin{eqnarray*}
\lim_{n\to \infty} \frac{\mu(X_n)^2}{N_n}\sum_{i=0}^{{N_n}-1} 
| \mu (T^iA\cap A) - \mu (A)^2 \frac{1}{\mu(X_n)} | = 0 . 
\end{eqnarray*}
Therefore, by Lemma \ref{approx2}, 
$T$ is rationally weakly mixing. 
\end{proof} 
Rigidity on $\rho_n$ can be established in a similar fashion, 
using approximation (2) from Lemma \ref{rescalinglem}, 
and similar choices for $M_n$, $\epsilon_n$ and $h_n$. 

\section{Power Rational Weak Mixing}
We show that the techniques given in this paper can be applied 
to the class of power rational weak mixing transformations. 
We can use the constructions defined previously in this paper. 
We need to update the choice of the parameters 
$M_n$, $\epsilon_n$ and $h_n$. 
Let $V$ be the collection of all finite vectors 
comprised of nonzero integers.  The collection $V$ 
is countable, so we can order 
$V=\{ v_1, v_2, \ldots \}$. 
For each $v\in V$ and $n\in \natural$, 
define the finite measure preserving transformation 
\[
\mathcal{R}_{n,v} = R_n^{u_1} \times R_n^{u_2} \times 
\ldots \times R_n^{u_{|v|}}
\]
where $v = \langle u_1,u_2,\ldots ,u_{|v|} \rangle$. 
Products of sets from $P_n^{\prime}$ may be used 
to produce a finite approximating collection 
for the Cartesian product space. 
Also, Lemma \ref{rescalinglem} may be extended 
in a straightforward manner to subsets 
of the product space. Note the map 
$\tau_n$ can be applied pointwise to produce 
an analogous isomorphism on products. 
Suppose $j_n$ is a sequence of natural numbers 
such that 
$\lim_{n\to \infty} j_n = \infty$. 
Now, replace condition (\ref{cond1}) above 
with the following condition, 
\[
\frac{\mu(X_n)^2}{N} \sum_{i=0}^{N-1} 
| \mu(A\cap \mathcal{R}_{n,v_j}^iA) - \mu(A)^2\frac{1}{\mu(X_n)} | 
< \delta_n 
\]
and require this hold for $1\leq j\leq j_n$ and $N \geq M_n$. 
This is possible, since $R_n$ is finite measure preserving, 
weak mixing, and all finite Cartesian products 
of nonzero powers of $R_n$ will be weak mixing. 
In a manner similar to the case of a single transformation $T$, 
we can force the product transformation to be 
conservative ergodic by ensuring the set 
$X_1\times X_1\times \ldots X_1$ sweeps out 
under the product transformation. 
The rest of the arguments from the proof of Theorem \ref{infinite_towerplex} 
go through in the same manner, but with 
$\mathcal{R}_{n,v_j}$ replacing $R_n$ and 
\[
T^{u_1}\times T^{u_2}\times \ldots \times T^{u_{|v_j|}}
\]
replacing $T$.  If the sequence $j_n$ grows slowly enough, 
then we still have 
$\sum_{k=n+1}^{\infty}2\kappa \epsilon_{k-1} \to 0$ as $n\to \infty$, 
and our result follows. $\Box$

The corollaries below follow from Theorem \ref{infinite_towerplex2} 
and corollaries given in \cite{Towerplex1}. 
Given a sequence $\mathcal{A}$, 
define the density function $g_{\mathcal{A}}:\natural \to [0,1]$ 
such that $g_{\mathcal{A}}(k)={\#(\mathcal{A} \cap \{1,2,\ldots k\})} / {k}$. 
\begin{corollary}
\label{slowrigid}
Given any real-valued function $f:\natural \to (0,\infty)$ such that 
$$\lim_{n\to \infty}f(n)=0,$$ 
there exists an infinite measure preserving, 
power rationally weakly mixing transformation 
with rigidity sequence $\mathcal{A}$ such that 
$$
\lim_{n\to \infty}\frac{f(n)}{g_{\mathcal{A}}(n)}=0.
$$
Also, there exist infinite measure preserving, 
power rationally weakly mixing transformations 
with rigidity sequences 
$\rho_n$ satisfying 
$$
\lim_{n\to \infty}\frac{\rho_{n+1}}{\rho_n}=1.
$$
\end{corollary}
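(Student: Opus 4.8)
The plan is to deduce both assertions directly from Theorem~\ref{infinite_towerplex2}, combined with the corresponding rigidity-sequence corollaries for finite measure preserving weak mixing transformations established in \cite{Towerplex1}. No new construction is needed: the entire content lies in the finite-measure results and in the fact that Theorem~\ref{infinite_towerplex2} transfers a rigidity sequence of an ergodic probability preserving transformation to an infinite measure preserving, power rationally weakly mixing one without altering the sequence.

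For the first assertion, I would begin from the analogous corollary of \cite{Towerplex1}: given $f:\natural\to(0,\infty)$ with $f(n)\to 0$, there is a finite measure preserving weak mixing transformation $R$ on a Lebesgue probability space with a rigidity sequence $\mathcal{A}$ satisfying $\lim_{n\to\infty} f(n)/g_{\mathcal{A}}(n)=0$. Since $R$ is invertible, ergodic (being weak mixing) and lives on a Lebesgue probability space, Theorem~\ref{infinite_towerplex2} applies and produces an invertible infinite measure preserving, power rationally weakly mixing transformation $T$ that is rigid on $\mathcal{A}$. Because the density function $g_{\mathcal{A}}$ depends only on the set $\mathcal{A}$ and not on the underlying transformation, the condition $f(n)/g_{\mathcal{A}}(n)\to 0$ is inherited verbatim, so $T$ is the required example.

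For the second assertion, I would likewise invoke the corollary of \cite{Towerplex1} that furnishes a finite measure preserving weak mixing transformation with a rigidity sequence $\rho_n$ obeying $\rho_{n+1}/\rho_n\to 1$, and then apply Theorem~\ref{infinite_towerplex2} to obtain an invertible infinite measure preserving, power rationally weakly mixing $T$ rigid on $\rho_n$; the ratio condition is intrinsic to the sequence, so it passes to $T$ unchanged.

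The only points requiring attention are bookkeeping ones: verifying that the transformations supplied by \cite{Towerplex1} meet the hypotheses of Theorem~\ref{infinite_towerplex2} (invertibility, ergodicity, and a Lebesgue probability space, all automatic for those weak mixing constructions), and recalling that the quantities $g_{\mathcal{A}}$ and $\rho_{n+1}/\rho_n$ are properties of the chosen sequences rather than of the dynamics. I do not anticipate a genuine obstacle; the substantive work was already carried out in establishing Theorem~\ref{infinite_towerplex2} and in the finite-measure corollaries being cited.
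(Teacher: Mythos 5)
Your proposal is correct and matches the paper's own treatment: the paper states that these corollaries follow from Theorem~\ref{infinite_towerplex2} together with the corresponding corollaries of \cite{Towerplex1}, which is exactly your argument. The observation that $g_{\mathcal{A}}$ and the ratio condition $\rho_{n+1}/\rho_n \to 1$ are properties of the sequence alone, and hence transfer automatically, is the right (and only) point needing mention.
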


\begin{corollary}
\label{rotations}
Let $\alpha\in (0,1)$ be any irrational number, and let 
$\rho_n$ be a sequence of natural numbers satisfying 
$$
\lim_{n\to \infty} |\exp{(2\pi i\alpha \rho_n)}-1|=0.
$$
Then there exists an infinite measure preserving, 
power rationally weakly mixing transformation 
$T$ such that $\rho_n$ is a rigidity sequence for $T$. 
\end{corollary}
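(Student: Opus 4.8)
The plan is to notice that the displayed condition on $\rho_n$ says exactly that $\rho_n$ is a rigidity sequence for the rotation by $\alpha$, and then to apply Theorem~\ref{infinite_towerplex2} to that rotation. In other words, the corollary is just a short verification followed by one invocation of the main theorem.

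First I would set $R=R_\alpha$ to be the rotation $x\mapsto x+\alpha$ on the circle $\mathbb{T}=\mathbb{R}/\mathbb{Z}$, equipped with its Borel $\sigma$-algebra and Haar (Lebesgue) probability measure. Since $\alpha$ is irrational, $R_\alpha$ is an invertible, ergodic, measure preserving transformation on a Lebesgue probability space, hence an admissible starter transformation for Theorem~\ref{infinite_towerplex2}. Next I would check that $\rho_n$ is a rigidity sequence for $R_\alpha$. Working in $L^2(\mathbb{T})$ with the orthonormal basis of characters $e_k(x)=\exp(2\pi i k x)$, $k\in\mathbb{Z}$, one has $U_{R_\alpha}^{\rho_n}e_k=\exp(2\pi i k\alpha\rho_n)\,e_k$, and the hypothesis $\lim_n|\exp(2\pi i\alpha\rho_n)-1|=0$ yields $\exp(2\pi i k\alpha\rho_n)=\bigl(\exp(2\pi i\alpha\rho_n)\bigr)^{k}\to 1$ for each fixed $k$ (taking a complex conjugate when $k<0$). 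Thus $\|U_{R_\alpha}^{\rho_n}e_k-e_k\|_2\to 0$ for every $k$; since the operators $U_{R_\alpha}^{\rho_n}$ are unitary and therefore uniformly bounded, this convergence on the dense linear span of the characters extends to all of $L^2(\mathbb{T})$, so $\mu(R_\alpha^{\rho_n}A\triangle A)\to 0$ for every measurable $A\subseteq\mathbb{T}$. Hence $R_\alpha$ is rigid along $\rho_n$.

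Finally I would apply Theorem~\ref{infinite_towerplex2} with $R=R_\alpha$: this produces an invertible infinite measure preserving transformation $T$ that is both rigid on $\rho_n$ and power rationally weakly mixing, which is precisely the conclusion of the corollary. I do not anticipate any real obstacle; the only step that merits a comment is the passage from character-by-character convergence of $U_{R_\alpha}^{\rho_n}$ to convergence of the entire unitary operator, which is immediate from uniform boundedness, and all of the substantive work has already been carried out in the proof of Theorem~\ref{infinite_towerplex2}.
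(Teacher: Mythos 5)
Your proposal is correct and matches the paper's intended argument: the paper proves this corollary simply by invoking Theorem~\ref{infinite_towerplex2} (together with the analogous corollaries from \cite{Towerplex1}), and your route—observing via the character computation that the hypothesis makes $\rho_n$ a rigidity sequence for the ergodic rotation $R_\alpha$, which is an admissible starter for Theorem~\ref{infinite_towerplex2}—is the same idea, just carried out directly and self-containedly. No gaps; the uniform-boundedness/density step you flag is indeed all that is needed.
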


\subsection*{Acknowledgements}
The author wishes to thank Jon Aaronson, Nathaniel Friedman, 
Mariusz Lema\'nczyk, Cesar Silva and several others 
for valuable feedback on this research.

\end{document}